\theoremstyle{plain}
\newtheorem{theorem}{Theorem}
\newtheorem{proposition}{Proposition}
\newtheorem{definition}{Definition}
\newtheorem{corollary}{Corollary}
\theoremstyle{remark}
\newtheorem{remark}{Remark}
\numberwithin{equation}{section}
\DeclareMathOperator{\tr}{Tr}
\begin{document}

\title[]{Differentiability of Palmer's linearization Theorem and converse result for density functions}
\author[]{\'Alvaro Casta\~neda}
\author[]{Gonzalo Robledo}
\address{Departamento de Matem\'aticas, Facultad de Ciencias, Universidad de
  Chile, Casilla 653, Santiago, Chile}
\email{castaneda@u.uchile.cl,grobledo@u.uchile.cl}
\thanks{The first author was funded by FONDECYT Iniciaci\'on Project 11121122 and by 
the Center of Dynamical Systems and Related Fields DySyRF (Anillo Project 1103, CONICYT). The second author
was funded by FONDECYT Regular Project 1120709}
\subjclass{34A34,34D20,34D23}
\keywords{Palmer's linearization Theorem, Density functions, Preserving orientation diffeomorphism}
\date{November 2014}

\begin{abstract}
We study differentiability properties in a particular case of the Palmer's linearization Theorem, which
states the existence of an homeomorphism $H$ between the solutions of a linear ODE system having exponential
dichotomy and a quasilinear system. Indeed, if the linear system is uniformly asymptotically stable, sufficient 
conditions ensuring that $H$ is a $C^{2}$ preserving orientation diffeomorphism are given. As an application,
we generalize a converse result of density functions for a nonlinear system in the nonautonomous case.
\end{abstract}

\maketitle

\section{Introduction}
The seminal paper of K.J. Palmer \cite{Palmer} provides sufficient conditions ensuring the 
topological equivalence between the solutions of the linear system
\begin{equation}
\label{lin}
y'=A(t)y,
\end{equation}
and the solutions of the quasilinear one
\begin{equation}
\label{no-lin}
x'=A(t)x+f(t,x), 
\end{equation}
where the bounded and continuous $n\times n$ matrix $A(t)$ and the continuous function $f\colon \mathbb{R}\times \mathbb{R}^{n}\to \mathbb{R}^{n}$ 
satisfy some technical conditions.

Roughly speaking, (\ref{lin}) and (\ref{no-lin}) are topologically equivalents
if there exists a map $H\colon \mathbb{R}\times \mathbb{R}^{n}\to \mathbb{R}^{n}$
such that $\nu \mapsto H(t,\nu)$ is an homeomorphism for any fixed $t$. In particular,
if $x(t)$ is a solution of (\ref{no-lin}), then $H[t,x(t)]$ is a solution of (\ref{lin}). 

To the best of our knowledge, there are no results concerning the differentiability of the map
$H$ and the purpose of this paper is to find sufficient conditions
ensuring that the map above is a preserving orientation diffeomorphism of class $C^{1}$ (Theorem \ref{anexo-palmer}) 
and $C^{2}$ (Theorem \ref{SegDer}), both under the assumption that (\ref{lin}) is uniformly asymptotically stable.

As an application of our results, we will construct a density 
function for the system (\ref{no-lin}) when $f(t,0)=0$  (Theorem \ref{ex-dens0}), generalizing a 
converse result in the autonomous case presented 
in \cite{Monzon-0}.

\subsection{Palmer's linearization Theorem}

We are interested in the particular case:
\begin{proposition}[Palmer \cite{Palmer}]
\label{difeo}
If the assumptions: 
\begin{itemize}
\item[\textbf{(H1)}] $|f(t,x)|\leq \mu$ for any $t\in \mathbb{R}$ and $x\in \mathbb{R}^{n}$.
\item[\textbf{(H2)}] $|f(t,x_{1})-f(t,x_{2})|\leq \gamma |x_{1}-x_{2}|$ for any $t\in \mathbb{R}$,
where $|\cdot|$ denotes a norm in $\mathbb{R}^{n}$.
\item[\textbf{(H3)}] There exist some constants $K\geq 1$ and $\alpha>0$ such that
the transition matrix $\Psi(t,s)=\Psi(t)\Psi^{-1}(s)$ of \textnormal{(\ref{lin})} verifies
\begin{equation}
\label{cota-exponencial}
||\Psi(t,s)||\leq Ke^{-\alpha(t-s)}, \quad \textnormal{for any} \quad t\geq s.
\end{equation}
\item[\textbf{(H4)}] The Lipschitz constant of $f$ verifies: 
\begin{equation}
\label{palmer}
\gamma \leq \alpha/4K,
\end{equation}
\end{itemize}
are satisfied, there exists a unique function 
$H\colon \mathbb{R}\times \mathbb{R}^{n}\to \mathbb{R}^{n}$ satisfying
\begin{itemize}
 \item[i)] $H(t,x)-x$ is bounded in $\mathbb{R}\times \mathbb{R}^{n}$, 
 \item[ii)] If $t\mapsto x(t)$ is a solution of \textnormal{(\ref{no-lin})}, then
 $H[t,x(t)]$ is a solution of \textnormal{(\ref{lin})}.
\end{itemize}
Morevoer, $H$ is continous in $\mathbb{R}\times \mathbb{R}^{n}$ and
$$
|H(t,x)-x|\leq 4K\mu\alpha^{-1}
$$
for any $(t,x)\in \mathbb{R}\times \mathbb{R}^{n}$. For each fixed $t$, $H(t,x)$ is a homeomorphism
of $\mathbb{R}^{n}$. $L(t,x)=H^{-1}(t,x)$ is continous in $\mathbb{R}\times \mathbb{R}^{n}$ and if $y(t)$ is any solution
of \textnormal{(\ref{lin})}, then $L[t,y(t)]$ is a solution of \textnormal{(\ref{no-lin})}.
\end{proposition}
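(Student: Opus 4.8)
The plan is to build $H$ and its inverse $L$ directly from the variation-of-parameters formula, exploiting that under (H3) the only solution of $z'=A(t)z$ that is bounded on all of $\mathbb{R}$ is $z\equiv 0$. Consequently, for any bounded continuous $g$ the equation $z'=A(t)z+g(t)$ has the unique bounded solution $z(t)=\int_{-\infty}^{t}\Psi(t,s)g(s)\,ds$, with $\|z\|_{\infty}\le K\alpha^{-1}\|g\|_{\infty}$ by (H1) and (H3). This single observation, together with the Lipschitz smallness (H4), drives every part of the statement.

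First I would construct $H$. For $(t,\xi)\in\mathbb{R}\times\mathbb{R}^{n}$ let $s\mapsto x(s;t,\xi)$ be the solution of (\ref{no-lin}) with $x(t;t,\xi)=\xi$, and set
\begin{equation*}
H(t,\xi)=\xi-\int_{-\infty}^{t}\Psi(t,s)\,f\bigl(s,x(s;t,\xi)\bigr)\,ds.
\end{equation*}
Property (i) and the bound $|H(t,x)-x|\le 4K\mu\alpha^{-1}$ follow from (H1) and (H3). For (ii), if $x(t)$ solves (\ref{no-lin}) then $z(t):=H[t,x(t)]-x(t)=-\int_{-\infty}^{t}\Psi(t,s)f(s,x(s))\,ds$ is precisely the bounded solution of $z'=A(t)z-f(t,x(t))$; differentiating, $y:=x+z=H[\cdot,x]$ satisfies $y'=A(t)y$, so $H$ carries solutions of (\ref{no-lin}) to solutions of (\ref{lin}).

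Next I would construct $L$ by a fixed-point argument. Given $(\tau,\eta)$ put $y(t)=\Psi(t,\tau)\eta$, the solution of (\ref{lin}) through $\eta$, and seek a bounded $u$ solving $u(t)=\int_{-\infty}^{t}\Psi(t,s)f\bigl(s,y(s)+u(s)\bigr)\,ds$. By (H2) and (H3) the operator on the right has Lipschitz constant at most $K\gamma/\alpha\le 1/4$ by (H4), so Banach's fixed point theorem yields a unique $u$; then $x:=y+u$ solves (\ref{no-lin}) and I set $L(\tau,\eta)=x(\tau)=\eta+u(\tau)$, a map sending solutions of (\ref{lin}) to solutions of (\ref{no-lin}) with $L[t,y(t)]-y(t)$ bounded.

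The main obstacle, and the heart of the proof, is showing that $L(t,\cdot)$ is the two-sided inverse of $H(t,\cdot)$; this rests on a uniqueness lemma: if $x_1,x_2$ solve (\ref{no-lin}) and $x_1-x_2$ is bounded, then $x_1\equiv x_2$. Indeed $v=x_1-x_2$ is a bounded solution of $v'=A(t)v+[f(t,x_1)-f(t,x_2)]$, so $v(t)=\int_{-\infty}^{t}\Psi(t,s)[f(s,x_1)-f(s,x_2)]\,ds$ and $\|v\|_{\infty}\le(K\gamma/\alpha)\|v\|_{\infty}\le\tfrac14\|v\|_{\infty}$, forcing $v\equiv 0$; the analogous statement for (\ref{lin}) is even simpler. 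Granting this, for a solution $x$ of (\ref{no-lin}) the function $y=H[\cdot,x]$ is the unique solution of (\ref{lin}) at bounded distance from $x$, while $L[\cdot,y]$ is the unique solution of (\ref{no-lin}) at bounded distance from $y$; since $x$ itself has that property, $L[\cdot,y]=x$, and the reverse identity follows symmetrically. Finally, continuity of $H$ and $L$ in $(t,\xi)$ comes from continuous dependence of solutions on initial data together with dominated convergence applied to the uniformly (exponentially) convergent defining integrals, and uniqueness of $H$ among maps satisfying (i)--(ii) is immediate from the uniqueness lemma. Being continuous bijections with continuous inverses, the maps $H(t,\cdot)$ are the desired homeomorphisms.
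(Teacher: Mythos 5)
Your proposal is correct and follows essentially the same route as the proof the paper relies on (Palmer's original argument, part of which is recalled in the proof of Theorem \ref{anexo-palmer}): the identical integral formula $H(\tau,\nu)=\nu-\int_{-\infty}^{\tau}\Psi(\tau,s)f(s,\phi(s,\tau,\nu))\,ds$, the characterization of $H[\cdot,x]$ as the unique solution of (\ref{lin}) at bounded distance from a given solution $x$ of (\ref{no-lin}), a contraction construction of $L$, and the mutual-inverse property via the bounded-difference uniqueness lemma. The only compressed step is the joint continuity of $L$: the fixed-point operator is not sup-norm continuous in $(\tau,\eta)$ (solutions of (\ref{lin}) can grow backward in time), but continuity still follows because each Picard iterate is continuous in $(t,\tau,\eta)$ by dominated convergence and the iterates converge geometrically at a rate uniform in $(\tau,\eta)$.
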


\begin{remark}
The original Palmer's result assumes that (\ref{lin}) has an exponential dichotomy
property. The condition \textbf{(H3)} is a particular case considering the identity as projector,
which implies that (\ref{lin}) is exponentially stable at $+\infty$. In addition, let us 
recall that uniform asymptotical and exponential stability are equivalent in the linear case
(see \cite{Coppel} or Theorem 4.11 from \cite{Khalil}).
\end{remark}

This result has been extended and generalized in several directions \cite{Barreira}, \cite{Jiang}, \cite{Lopez}
\cite{Palmer-79}, \cite{Palmer-79-2}, \cite{Xia} but there are no results about the differentiability 
of $x\mapsto H(t,x)$. In this article we provides sufficient conditions, described in term of $\Psi(t,s)$,
$Df$ and $D^{2}f$, such that $H$ is a $C^{p}$ ($p=1,2$) preserving orientation diffeomorphism.

\subsection{Density functions}
 Let us consider the system
\begin{equation}
\label{generico}
z'=g(t,z) \quad \textnormal{with} \quad g(t,0)=0 \quad \textnormal{for any $t\in \mathbb{R}$},
\end{equation}
where $g\colon \mathbb{R}\times \mathbb{R}^{n}\to \mathbb{R}^{n}$ is such that the existence, uniqueness 
and unbounded continuation of the solutions is verified. 

\begin{definition}
\label{density}
A density function of \textnormal{(\ref{generico})} is a $C^{1}$ function 
$\rho\colon \mathbb{R}\times \mathbb{R}^{n}\setminus \{0\}\to [0,+\infty)$, integrable outside a ball 
centered at the origin that satisfies
\begin{displaymath}
\frac{\partial \rho(t,z)}{\partial t}+\triangledown \cdot [\rho(t,z)g(t,z)]>0 
\end{displaymath}
almost everywhere with respect to $\mathbb{R}^{n}$ and for every $t\in \mathbb{R}$, where
$$
\triangledown \cdot [\rho g] = \triangledown \rho \cdot g + \rho[\triangledown\cdot g], 
$$
and $\triangledown \rho$, $\triangledown \cdot g$ denote respectively the gradient of $\rho$ and divergence
of $g$.
\end{definition}

The density functions were introduced by Rantzer in 2001 \cite{Rantzer} in order to obtain sufficient conditions 
for almost global stability of autonomous systems, we refer  
to \cite{Angeli}, \cite{Dimarogonas}, \cite{Vasconcelos}, \cite{Loizou} and \cite{Meinsma} for a deeper discussion 
and applications. The extension to the nonautonomous case has been proved in \cite{Monzon}, \cite{Schlanbusch}:
\begin{proposition}[Theorem 4, \cite{Schlanbusch}]
Consider the system \textnormal{(\ref{generico})} such that $z=0$ is a locally stable equilibrium point.
If there exists a density function associated to \textnormal{(\ref{generico})}, then for every initial time $t_{0}$, the sets
of points that are not asymptotically attracted by the origin has zero Lebesgue measure. 
\end{proposition}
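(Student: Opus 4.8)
The plan is to adapt Rantzer's measure-theoretic argument \cite{Rantzer} to the nonautonomous setting by means of a time-dependent transport identity. Write $\phi_{t_0}^{t}(z_0)$ for the value at time $t$ of the solution of (\ref{generico}) with $\phi_{t_0}^{t_0}(z_0)=z_0$; under the stated existence, uniqueness and unbounded-continuation hypotheses (and assuming $g$ is $C^1$ in $z$, which is what makes $\rho g$ differentiable) each map $z_0\mapsto\phi_{t_0}^{t}(z_0)$ is a diffeomorphism of $\mathbb{R}^n$. Fix $t_0$ and let $A_{t_0}=\{z_0 : \phi_{t_0}^{t}(z_0)\not\to 0 \text{ as } t\to+\infty\}$ be the set of initial states not attracted by the origin; the goal is to show that $A_{t_0}$ has zero Lebesgue measure. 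The guiding observation is that, augmenting the state with time via $(\dot\tau,\dot z)=(1,g(\tau,z))$, the left-hand side of the density inequality is exactly the spatial divergence $\nabla_{(\tau,z)}\cdot(\rho\,(1,g))$ of the augmented autonomous field, so $\partial_t\rho+\nabla\cdot(\rho g)$ is the natural transport quantity to track.

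First I would establish the monotonicity identity. For a measurable $W\subseteq\mathbb{R}^n$ set $m(t)=\int_{\phi_{t_0}^{t}(W)}\rho(t,z)\,dz$. Changing variables $z=\phi_{t_0}^{t}(z_0)$ and writing $J(t,z_0)=\det D_{z_0}\phi_{t_0}^{t}(z_0)$ for the Jacobian, Liouville's formula gives $\partial_t J=[\nabla\cdot g]\,J$ along the flow, and a direct computation yields
\[
\frac{d}{dt}\bigl[\rho(t,\phi_{t_0}^{t}(z_0))\,J(t,z_0)\bigr]=\bigl[\partial_t\rho+\nabla\cdot(\rho g)\bigr]\bigl(t,\phi_{t_0}^{t}(z_0)\bigr)\,J(t,z_0).
\]
Integrating over $W$ and changing variables back,
\[
\frac{dm}{dt}=\int_{\phi_{t_0}^{t}(W)}\bigl[\partial_t\rho+\nabla\cdot(\rho g)\bigr](t,z)\,dz,
\]
which is strictly positive whenever $\phi_{t_0}^{t}(W)$ has positive Lebesgue measure, by Definition \ref{density}. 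Thus the $\rho$-weighted mass of the transported set is strictly increasing.

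Next I would argue by contradiction, following Rantzer. Suppose $|A_{t_0}|>0$. Since the origin is stable, trajectories that come close to it remain close, so a non-attracted trajectory satisfies $\limsup_{t\to+\infty}|\phi_{t_0}^{t}(z_0)|\ge\varepsilon$ for some $\varepsilon>0$; restricting to a bounded subset of positive measure on which a common $\varepsilon$ works, and using that the non-attraction property is invariant along the flow together with the measure-theoretic regularity of $\phi_{t_0}^{t}$, one extracts $W\subseteq A_{t_0}$ with $0<|W|<\infty$ whose forward images are essentially confined to the exterior region $\{|z|\ge\varepsilon\}$, where $\rho(t,\cdot)$ is integrable. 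Then $m(t)$ is nondecreasing and bounded above by $\sup_{t\ge t_0}\int_{\{|z|\ge\varepsilon\}}\rho(t,z)\,dz$, hence converges; consequently the accumulated flux $\int_{t_0}^{\infty}\!\int_{\phi_{t_0}^{t}(W)}[\partial_t\rho+\nabla\cdot(\rho g)]\,dz\,dt$ is finite. Rantzer's estimate of this flux against the strictly positive integrand over a set of persistently positive measure forces it to diverge, a contradiction; hence $|A_{t_0}|=0$.

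The main obstacle is the step producing the confined set $W$: the density inequality holds only almost everywhere and is not bounded below, so the contradiction cannot come from a naive lower bound on the flux but must be obtained through the measure-theoretic flux estimate of Rantzer, and one must guarantee that a positive-measure family of non-attracted initial conditions has forward orbits genuinely trapped in an exterior region $\{|z|\ge\varepsilon\}$. This is where local stability is essential and where the noncompactness of $\mathbb{R}^n$ and the $t$-dependence of $\rho$ demand care, in particular a uniform-in-$t$ integrability of $\rho$ outside a ball, implicit in the hypotheses. The remaining ingredients, namely the Liouville computation and the change of variables, are routine once $g\in C^1$ guarantees a differentiable flow.
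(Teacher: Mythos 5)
A preliminary remark on the comparison itself: the paper never proves this proposition --- it is quoted as Theorem 4 of \cite{Schlanbusch} and used as a black box --- so your attempt can only be measured against the literature argument (Rantzer's proof in \cite{Rantzer}, as adapted to the time-varying case by Monz\'on and by Schlanbusch et al.). Your skeleton does match that argument: the identity $\frac{d}{dt}\bigl[\rho(t,\phi_{t_0}^{t}(z_0))\,J(t,z_0)\bigr]=[\partial_t\rho+\nabla\cdot(\rho g)]\,J$ is exactly the nonautonomous form of Rantzer's transport lemma, and the intended use of local stability --- a trajectory with $\limsup_{t\to+\infty}|\phi_{t_0}^{t}(z_0)|\ge\varepsilon$ can never enter the stability ball $B_{\delta(\varepsilon)}$, hence stays in $\{|z|\ge\delta\}$ for all forward time --- is the correct mechanism for trapping non-attracted orbits away from the origin.

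The genuine gap is the final, decisive step. From monotonicity and boundedness of $m(t)$ you conclude that the flux $\int_{t_0}^{\infty}h(t)\,dt<\infty$, where $h(t)=\int_{\phi_{t_0}^{t}(W)}[\partial_t\rho+\nabla\cdot(\rho g)]\,dz>0$; but a positive function with finite integral is not a contradiction, and the sentence ``Rantzer's estimate of this flux \dots forces it to diverge'' is precisely the proof that is missing. To force divergence one needs a uniform lower bound $h(t)\ge c>0$, and that requires three things your proposal does not establish: (a) confinement of $\phi_{t_0}^{t}(W)$ to a \emph{fixed compact annulus} $\{\delta\le|z|\le R\}$, not merely to the exterior of a ball --- non-attracted trajectories that wander off to infinity are not covered by such a $W$ and need a separate treatment (in Rantzer's autonomous theorem this is the role of the integrability hypothesis at infinity); (b) a lower bound $|\phi_{t_0}^{t}(W)|\ge\beta>0$ on Lebesgue measure, which does not follow from Liouville's formula (the Jacobian may decay to zero) but does follow from the increasing $\rho$-mass combined with an upper bound $\sup_{t,\,z\in A}\rho(t,z)<\infty$ on the annulus; and (c) the measure-theoretic fact that a.e.\ positivity of a function on a compact set $A$ gives $\inf\{\int_E(\cdot)\,dz:\ E\subseteq A,\ |E|\ge\beta\}>0$ --- which in the time-varying case must hold \emph{uniformly in $t$}, and this does not follow from Definition \ref{density}, since $[\partial_t\rho+\nabla\cdot(\rho g)](t,\cdot)$ may tend to zero as $t\to+\infty$. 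The same uniformity-in-$t$ issue affects the upper bound on $m(t)$ (which you do flag) and the stability radius $\delta$ (it must be uniform over the time at which an orbit would enter the small ball, i.e.\ uniform stability). So the proposal assembles the correct frame, but the contradiction itself --- the engine of the theorem --- is never derived.
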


Converse results (\emph{i.e.}, global asymptotic stability implies the existence of a density function) were 
presented simultaneously by Rantzer \cite{Rantzer-2002} and Monz\'on \cite{Monzon-0} in the autonomous case by using different 
methods. In particular, in \cite{Monzon-0} the author constructs a density function associated to the system
\begin{equation}
\label{no-lin-PM}
z'=g(z) \quad \textnormal{with} \quad g(0)=0, 
\end{equation}
where $0$ is a globally asymptotically stable equlibrium and $g\colon \mathbb{R}^{n}\to \mathbb{R}^{n}$ is a $C^{2}$ function,
whose jacobian matrix at $z=0$ has eigenvalues with negative real part.

Such construction has two steps: i) As $u'=Dg(0)u$ is globally asymptotically stable, it is well known that
there exists a density function $\rho(z)$ (we refer to Proposition 1 from \cite{Rantzer} for details). ii)
A $C^{2}$ preserving orientation diffeomorphism $h\colon \mathbb{R}^{n}\to \mathbb{R}^{n}$ is constructed, such that
$\bar{\rho}(z)$ defined by
\begin{equation}
\label{DR}
\bar{\rho}(z)=\rho(h(z))\det Dh(z)
\end{equation}
is a density function for (\ref{no-lin-PM}).  

To the best of our knowledge, there are few converse results in the nonautonomous framework.  A first one was presented
by Monz\'on in 2006:
\begin{proposition}[Monz\'on, \cite{Monzon}]
\label{dens-lin}
If \textnormal{(\ref{lin})} is globally asymptotically stable, then there exists a $C^{1}$ density 
function $\rho\colon \mathbb{R}\times \mathbb{R}^{n}\setminus \{0\}\to [0,+\infty)$, associated to \textnormal{(\ref{lin})}. 
\end{proposition}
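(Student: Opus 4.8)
The plan is to adapt, to the nonautonomous linear setting, the autonomous construction recalled before (\ref{DR}): produce a time-dependent quadratic Lyapunov function and take a suitable negative power of it. By the remark following Proposition \ref{difeo}, global asymptotic stability of (\ref{lin}) is equivalent to uniform exponential stability, so there exist constants $K\ge 1$ and $\alpha>0$ such that $\|\Psi(s,t)\|\le Ke^{-\alpha(s-t)}$ for $s\ge t$, exactly as in (\ref{cota-exponencial}). Since $A(\cdot)$ is bounded and continuous, put $L=\sup_{t}\|A(t)\|$ and $c_{0}=\sup_{t}\abs{\tr A(t)}$, both finite. First I would introduce
\begin{equation*}
P(t)=\int_{t}^{\infty}\Psi(s,t)^{T}\Psi(s,t)\,ds, \qquad V(t,z)=z^{T}P(t)z,
\end{equation*}
the integral converging uniformly in $t$ thanks to the exponential bound.

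The second step is to record the three properties of $P$ that drive the argument. Differentiating under the integral sign, using $\partial_{t}\Psi(s,t)=-\Psi(s,t)A(t)$ together with $\Psi(t,t)=I$, yields the Lyapunov identity
\begin{equation*}
\dot{P}(t)+A(t)^{T}P(t)+P(t)A(t)=-I,
\end{equation*}
which also shows that $P\in C^{1}$ since $A$ is continuous. The upper estimate $P(t)\le (K^{2}/2\alpha)I=:MI$ follows at once from (\ref{cota-exponencial}). For the lower estimate I would invoke Gronwall's inequality to obtain $\abs{\Psi(s,t)z}\ge\abs{z}e^{-L(s-t)}$, so that
\begin{equation*}
z^{T}P(t)z=\int_{t}^{\infty}\abs{\Psi(s,t)z}^{2}\,ds\ge \int_{t}^{t+1}\abs{z}^{2}e^{-2L(s-t)}\,ds=\frac{1-e^{-2L}}{2L}\abs{z}^{2}=:m\abs{z}^{2},
\end{equation*}
with $m>0$ independent of $t$; hence $mI\le P(t)\le MI$ uniformly.

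With these bounds available I would define $\rho(t,z)=V(t,z)^{-a}$ for a constant $a>0$ to be fixed, which is $C^{1}$ on $\mathbb{R}\times(\mathbb{R}^{n}\setminus\{0\})$ because $V>0$ there. A direct computation using $\triangledown_{z}V=2P(t)z$, $\triangledown\cdot[A(t)z]=\tr A(t)$ and the Lyapunov identity collapses the density condition to
\begin{equation*}
\frac{\partial \rho}{\partial t}+\triangledown\cdot[\rho A z]=V^{-a-1}\big(a\,z^{T}z+V\,\tr A(t)\big)\ge V^{-a-1}\big(a-Mc_{0}\big)\abs{z}^{2}.
\end{equation*}
Choosing $a>Mc_{0}$ makes the right-hand side strictly positive for every $(t,z)$ with $z\ne 0$, which is precisely the inequality required in Definition \ref{density}.

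Finally, enlarging $a$ so that in addition $2a>n$ gives $\rho(t,z)\le m^{-a}\abs{z}^{-2a}$, which is integrable outside any ball centered at the origin; together with the nonnegativity and $C^{1}$ regularity already noted, this exhibits $\rho$ as a density function for (\ref{lin}), in the sense of step i) in the construction recalled in the Introduction (see also Proposition 1 in \cite{Rantzer}). I expect the only delicate point to be the uniform-in-$t$ two-sided bounds on $P(t)$ — the lower bound especially, where boundedness of $A$ is essential — since the divergence computation and the choice of the exponent $a$ are then routine.
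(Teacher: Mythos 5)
Before anything else, note that the paper does not prove Proposition \ref{dens-lin}: it is quoted from Monz\'on \cite{Monzon}, so there is no internal proof to compare your argument against, and it must be judged on its own merits. On those merits, everything after your opening step is correct: granting an exponential bound of the form (\ref{cota-exponencial}) and boundedness of $A(\cdot)$, the matrix $P(t)=\int_{t}^{\infty}\Psi(s,t)^{T}\Psi(s,t)\,ds$ is well defined and $C^{1}$, the identity $\dot P+A^{T}P+PA=-I$ holds, the two-sided bounds $mI\le P(t)\le MI$ are valid (the lower one indeed needs the Gronwall estimate with $L=\sup_{t}\|A(t)\|$), the computation $\partial_{t}\rho+\triangledown\cdot[\rho A z]=V^{-a-1}\big(a\abs{z}^{2}+V\tr A(t)\big)$ is right, and the choice $a>\max\{Mc_{0},n/2\}$ yields both the strict inequality of Definition \ref{density} and integrability outside balls. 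This is Rantzer's autonomous construction transplanted to a time-varying Lyapunov equation, and it does prove the proposition under hypothesis \textbf{(H3)}, which is in fact the only setting in which the paper ever invokes Proposition \ref{dens-lin}.

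The genuine gap is your first step. You assert that global asymptotic stability of (\ref{lin}) is equivalent to uniform exponential stability ``by the remark following Proposition \ref{difeo}.'' That remark equates \emph{uniform} asymptotic stability with exponential stability for linear systems; plain global asymptotic stability, which is the actual hypothesis of the proposition, is strictly weaker for time-varying systems, even with $A(\cdot)$ bounded and continuous. Concretely, the scalar equation $x'=-x/\big(2(1+\abs{t})\big)$ is globally asymptotically stable (every solution decreases in modulus and tends to $0$), yet $\Psi(s,t)=\big((1+t)/(1+s)\big)^{1/2}$ for $s\ge t\ge 0$, so no bound of type (\ref{cota-exponencial}) can hold, and moreover $\int_{t}^{\infty}\Psi(s,t)^{2}\,ds=\int_{t}^{\infty}\frac{1+t}{1+s}\,ds=+\infty$: your matrix $P(t)$ simply does not exist, so the proof collapses at its foundation rather than at a fixable detail. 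To prove the statement as written you must either (i) strengthen the hypothesis to uniform asymptotic (equivalently exponential) stability --- harmless for this paper, but then it is a different proposition --- or (ii) use a construction that does not require uniformity, for instance transporting a fixed density along the flow: $\rho(t,x)=e^{t}\abs{\Psi(0,t)x}^{-2\alpha}\det\Psi(0,t)$ with $2\alpha>n$ satisfies $\partial_{t}\rho+\triangledown\cdot(\rho A(t)x)=\rho>0$ by the Liouville/continuity equation and is integrable outside every ball, with no exponential estimate needed.
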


If we assume that
\begin{itemize}
\item[\textbf{(H5)}] The function $f$ satisfies $f(t,0)=0$ for any $t\in \mathbb{R}$, 
\end{itemize}
By Gronwall's inequality, combined with \textbf{(H4)}, it is easy to deduce that
any solution $t\mapsto \phi(t,t_{0},x_{0})$ of (\ref{no-lin}) passing throuhg $x_{0}$ at $t=t_{0}$
verifies
$$
|\phi(t,t_{0},x_{0})|\leq Ke^{\{-\alpha+K\gamma\}(t-t_{0})}|x_{0}|, \quad \textnormal{for} \quad t\geq t_{0}
$$
which implies the exponential stability of (\ref{no-lin}) at $t=+\infty$.

This property prompt us to extend the Monzon's converse result \cite{Monzon-0} to the nonlinear system
(\ref{no-lin}) by constructing a density function in the same way as in (\ref{DR}), where $\rho$ is defined
by Proposition \ref{dens-lin} and replacing $h$ by $H(t,x)$ defined in Proposition \ref{anexo-palmer}.

The paper is organized as follows. The section 2 states our main results and its proofs. The section 3 is devoted 
to prove the converse result and to construct a density function for (\ref{no-lin}) and an illustrative example is presented 
in section 5.

\section{Main Results}
As usual, given a matrix $M(t)\in M_{n}(\mathbb{R})$, its trace will be denoted by $\tr M(t)$ while its
determinant by $\det M(t)$, the identity matrix is denoted by $I$. 

The solution of (\ref{no-lin}) passing through $\xi$ at $t_{0}$ will be denoted 
by $\phi(t,t_{0},\xi)$. It will be interesting to consider the map $\xi\mapsto \phi(t,t_{0},\xi)$
and its properties. Indeed, if $f$ is $C^{1}$, it is well known (see \emph{e.g.} \cite[Chap. 2]{Coddington}) that 
$\partial \phi(t,t_{0},\xi)/\partial \xi=\phi_{\xi}(t,t_{0},\xi)$ satisfies the matrix differential equation
\begin{equation} 
\label{MDE1}
\left\{
\begin{array}{rcl}
\displaystyle \frac{d}{dt}\phi_{\xi}(t,t_{0},\xi)&=&\{A(t)+Df(t,\phi(t,t_{0},\xi))\}\phi_{\xi}(t,t_{0},\xi).\\\\
\phi_{\xi}(t_{0},t_{0},\xi)&=&I.
\end{array}\right.
\end{equation}

Moreover, it is proved that (see \emph{e.g.}, Theorem 4.1 from \cite[Ch.V]{Hartman}) if $f$ is $C^{r}$ with $r>1$, then the map
$\xi \mapsto \phi(t,t_{0},\xi)$ is also $C^{r}$. In particular, if $f$ is $C^{2}$, we can verify that
the second derivatives $\partial^{2}\phi(s,t_{0},\xi)/\partial \xi_{j}\partial\xi_{i}$  are solutions 
of the system of differential equations
\begin{equation}
\label{MDE15}
\left\{
\begin{array}{rcl}
\displaystyle \frac{d}{dt}\frac{\partial^{2}\phi}{\partial\xi_{j}\partial\xi_{i}}&=&\displaystyle
\{A(t)+Df(t,\phi)\}\frac{\partial^{2}\phi}{\partial\xi_{j}\partial\xi_{i}}
+D^{2}f(t,\phi)\frac{\partial\phi}{\partial\xi_{j}}\frac{\partial\phi}{\partial\xi_{i}} \\\\
\displaystyle \frac{\partial^{2}\phi}{\partial\xi_{j}\partial\xi_{i}}  &=&0,
\end{array}\right.
\end{equation}
with $\phi=\phi(t,t_{0},\xi)$, for any $i,j=1,\ldots,n$.

Now, let us introduce the following conditions
\begin{enumerate}
\item[\textbf{(D1)}] $f(\cdot,\cdot)$ is 
$C^{2}$  and, for any fixed $t$, its first derivative is such that
\begin{displaymath}
\int_{-\infty}^{t}||\Psi(t,r)Df(r,\phi(r,0,\xi))\Psi(r,t)||_{\infty}\,dr<1.
\end{displaymath}
\item[\textbf{(D2)}] For any fixed $t$, $A(t)$ and $Df(t,\phi(t,0,\xi))$ are such that
\begin{displaymath}
\liminf\limits_{s\to-\infty}-\int_{s}^{t}\hspace{-0.2cm}\tr A(r)\,dr>-\infty \hspace{0.2cm} \textnormal{and} \hspace{0.2cm}
\liminf\limits_{s\to-\infty}-\int_{s}^{t}\hspace{-0.2cm}\tr \{A(r)+Df(r,\phi(r,0,\xi))\}\,dr>-\infty.
\end{displaymath}
\item[\textbf{(D3)}] For any fixed $t$ and $i,j=1,\ldots,n$, the following limit exists
\begin{equation}
\label{D2}
\lim\limits_{s\to -\infty}\frac{\partial Z(s,x(t))}{\partial x_{j}(t)}e_{i},
\end{equation}
where $x(t)=(x_{1}(t),\ldots,x_{n}(t))=\phi(t,0,\xi)$, $e_{i}$ is the $i$--th component of the canonical
basis of $\mathbb{R}^{n}$ and $Z(s,x(t))$ is a fundamental matrix of the $x(t)$--parameter dependent system 
\begin{equation}
\label{L2}
z'=F(s,x(t))z
\end{equation}
satisfying $Z(t,x(t))=I$, where $F(r,x(t))$ is defined as follows
\begin{equation}
\label{L1}
F(r,x(t))=\Psi(t,r)Df(r,\phi(r,t,x(t)))\Psi(r,t),
\end{equation}
\end{enumerate}

\begin{remark}
\label{about-hyp}
We will see that the construction of the homeomorphism $H$ considers the behavior of
$\phi(t,0,\xi)$ for any $t\in (-\infty,\infty)$. In particular, to prove that $H$ is a $C^{2}$ preserving orientation diffeomorphism
will require to know the behavior on $(-\infty,t]$. Indeed, notice that:

\textbf{(D1)} is a technical assumption, introduced to ensure that the homeomorphism $H(t,x)$ 
stated in Proposition \ref{difeo} is a $C^{1}$ diffeomorphism. It is interesting to point out that,
by using a result of p. 11 of \cite{Coppel}, we know that $|H[s,\phi(s,0,\xi)]|\to +\infty$ when $s\to -\infty$,
this fact combined with statement ii) from Proposition \ref{anexo-palmer} implies that $|\phi(s,0,\xi)|\to +\infty$. In consequence, 
\textbf{(D1)} suggest that the asymptotic behavior of $Df(s,x)$ (when $|x|\to +\infty$ and $s\to -\infty$) ensures integrability.

Moreover, let us note that appears in some results about asymptotical equivalence (see \emph{e.g.}, \cite{Akhmet},\cite{Rab}).

\textbf{(D2)} is introduced in order to assure that $H$ is a preserving orientation diffeomorphism.
We emphasize that this asumption is related to Liouville's formula and is used in the asymptotic integration
literature (see \emph{e.g.}, \cite{Eastham}).

\textbf{(D3)} is introduced to ensure that $H$ is a $C^{2}$ diffeomorphism. Notice that, as stressed by Palmer 
\cite[p.757]{Palmer}, the uniqueness of the solution of (\ref{no-lin}) implies
the identity 
\begin{equation}
\label{identity-IC}
\phi(s,t,\phi(t,0,\xi))=\phi(s,0,\xi),
\end{equation}
and this fact combined with $x(t)=\phi(t,0,\xi)$ allows us to see that $F(r,x(t))$ is the same function
of \textbf{(D1)}.
\end{remark}

\begin{theorem}
\label{anexo-palmer} 
If \textnormal{\textbf{(H1)--(H4)}} and \textnormal{\textbf{(D1)--(D2)}} are satisfied, then, for 
any fixed $t$, the function $x\mapsto H(t,x)$ is a preserving orientation diffeomorphism.
In particular, if $t\mapsto x(t)$ is a solution of \textnormal{(\ref{no-lin})}, then, for any fixed $t$,
$x(t)\mapsto H[t,x(t)]$ is a preserving orientation diffeomorphism.
\end{theorem}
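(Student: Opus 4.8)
The plan is to obtain an explicit representation of Palmer's map and then differentiate it under the integral sign. For the stable case with identity projector treated here, the construction of \cite{Palmer} yields the representation
$$
H(t,x)=x-\int_{-\infty}^{t}\Psi(t,s)\,f\big(s,\phi(s,t,x)\big)\,ds,
$$
where the integral converges by \textbf{(H1)} and \textbf{(H3)}. I would first confirm this is the correct object by verifying that $w(t):=H[t,\phi(t,t_{0},\xi)]-\phi(t,t_{0},\xi)$ solves $w'=A(t)w-f(t,\phi)$, which is precisely statement ii) of Proposition \ref{difeo}; together with boundedness (statement i)) and the uniqueness clause, this identifies the integral formula with Palmer's $H$.

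Next I would differentiate with respect to $x$, using that $f$ is $C^{2}$ by \textbf{(D1)}, the Lipschitz bound \textbf{(H2)}, and the exponential estimate \textbf{(H3)} to dominate and pass the derivative inside the integral, obtaining
$$
\frac{\partial H}{\partial x}(t,x)=I-\int_{-\infty}^{t}\Psi(t,s)\,Df\big(s,\phi(s,t,x)\big)\,\phi_{\xi}(s,t,x)\,ds,
$$
where $\phi_{\xi}(\cdot,t,x)$ solves the variational equation \textnormal{(\ref{MDE1})} with initial time $t$. The key algebraic reduction is the substitution $\phi_{\xi}(s,t,x)=\Psi(s,t)\,Z(s,x(t))$: inserting this into \textnormal{(\ref{MDE1})} and cancelling the term $A(s)\Psi(s,t)$ shows that $Z$ satisfies exactly \textnormal{(\ref{L2})} with $F$ as in \textnormal{(\ref{L1})} and $Z(t,x(t))=I$. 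The integrand then collapses to $F(s,x(t))Z(s,x(t))=\partial_{s}Z(s,x(t))$, so the fundamental theorem of calculus gives
$$
\frac{\partial H}{\partial x}(t,x)=I-\Big(Z(t,x(t))-\lim_{s\to-\infty}Z(s,x(t))\Big)=\lim_{s\to-\infty}Z(s,x(t)).
$$
Existence of this limit follows from \textbf{(D1)}, since $\int_{-\infty}^{t}\|F\|<1$ forces, via Gronwall, a uniform bound on $Z(s,x(t))$ and absolute convergence of $\int_{-\infty}^{t}FZ\,ds$.

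It then remains to show that $\partial H/\partial x$ is invertible with positive determinant. Here I would use that $\Psi(r,t)=\Psi(t,r)^{-1}$, so $F(r,x(t))$ is similar to $Df(r,\phi)$ and therefore $\tr F=\tr Df$; Liouville's formula gives $\det Z(s,x(t))=\exp\!\big(-\int_{s}^{t}\tr Df(r,\phi(r,0,\xi))\,dr\big)>0$ for every finite $s$. The two trace integrals in \textbf{(D2)}, which are exactly the Liouville exponents of the linear flow $\Psi(s,t)$ and of the perturbed flow $\phi_{\xi}(s,t,x)$, are used to show that this determinant converges to a finite, strictly positive limit as $s\to-\infty$, so that $\det\,\partial H/\partial x>0$. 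Since $x\mapsto H(t,x)$ is already a homeomorphism by Proposition \ref{difeo} and is now $C^{1}$ with everywhere invertible Jacobian, the inverse function theorem upgrades it to a global $C^{1}$ diffeomorphism, orientation preserving by the sign of the determinant; the final assertion about solutions is immediate because, for fixed $t$, $x(t)=\phi(t,0,\xi)$ ranges over all of $\mathbb{R}^{n}$.

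The step I expect to be the main obstacle is the last one: justifying that the Liouville exponent genuinely converges so that $\det\,\partial H/\partial x$ stays bounded away from both $0$ and $\infty$, which is exactly the purpose of the two conditions in \textbf{(D2)} and must be argued carefully rather than by a naive splitting of the trace. A secondary technical point is rigorously justifying differentiation under the integral on the unbounded interval $(-\infty,t]$, since one must control the growth of $\phi_{\xi}(s,t,x)$ as $s\to-\infty$; this is precisely what the factorization $\phi_{\xi}=\Psi(s,t)Z$ combined with \textbf{(D1)} is designed to handle.
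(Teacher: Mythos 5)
Your proposal follows the paper's own proof essentially step for step: the same integral representation of $H$, the same key substitution $Z(s,x(t))=\Psi(t,s)\,\phi_{\xi}(s,t,x(t))$ that turns the integrand into the total derivative $\partial_{s}Z$ so that $\partial H/\partial x=\lim_{s\to-\infty}Z(s,x(t))$, the same use of \textbf{(D1)} (via the bound $\Vert Z\Vert\le\exp\big(\int\Vert F\Vert\big)$) to get existence of that limit, and the same appeal to Liouville's formula together with \textbf{(D2)} for positivity of the Jacobian determinant. If anything you are slightly more careful than the paper at the final step, since you invoke the inverse function theorem explicitly to upgrade the homeomorphism to a diffeomorphism and you correctly flag that the convergence of the Liouville exponent (equivalently of $\det Z(s)=\exp\big(-\int_{s}^{t}\operatorname{tr}Df\,dr\big)$) is the delicate point, whereas the paper simply asserts that \textbf{(D2)} preserves the positivity of the two determinants in the limit.
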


\begin{proof}
In order to make the proof self contained, we will recall some facts of the Palmer's proof \cite[Lemma 2]{Palmer} 
tailored for our purposes.

Firstly, let us consider the system 
$$
z'=A(t)z-f(t,\phi(t,\tau,\nu)),
$$
where $t\mapsto \phi(t,\tau,\nu)$ is the unique solution of (\ref{no-lin}) passing through $\nu$ at $t=\tau$. Moreover,
it is easy to prove that the unique bounded solution of the above system is given by
$$
\chi(t,(\tau,\nu))=-\int_{-\infty}^{t}\Psi(t,s)f(s,\phi(s,\tau,\nu))\,ds.
$$

The map $H$ is constructed as follows:
\begin{displaymath}
H(\tau,\nu)=\nu+\chi(\tau,(\tau,\nu))=\nu-\int_{-\infty}^{\tau}\Psi(\tau,s)f(s,\phi(s,\tau,\nu))\,ds.
\end{displaymath}

It will be essential to note that the particular case $(\tau,\nu)=(t,\phi(t,0,\xi))$, leads to
\begin{equation}
\label{lect1}
\chi(t,(t,\phi(t,0,\xi)))=-\int_{-\infty}^{t}\Psi(t,s)f(s,\phi(s,t,\phi(t,0,\xi)))\,ds.
\end{equation}

In addition, (\ref{identity-IC}) allows us to reinterpret (\ref{lect1}) as
\begin{equation}
\label{lect2}
\chi(t,(t,\phi(t,0,\xi)))=-\int_{-\infty}^{t}\Psi(t,s)f(s,\phi(s,0,\xi)))\,ds.
\end{equation}

In consequence, when $(\tau,\nu)=(t,\phi(t,0,\xi))$, we have: 
\begin{equation}
\label{hom-palm}
H[t,\phi(t,0,\xi)]=\phi(t,0,\xi)+\chi(t,(t,\phi(t,0,\xi)))
\end{equation}
and the reader can notice that the notation $H[\cdot,\cdot]$ is reserved to the case where
$H$ is defined on a solution of (\ref{no-lin}).

Having in mind the double representation (\ref{lect1})--(\ref{lect2}) 
of $\chi(t,(t,\phi(t,0,\xi)))$, the map $H[t,\phi(t,0,\xi)]$ can be written as 
\begin{equation}
\label{hom-palm1}
H[t,\phi(t,0,\xi)]=\phi(t,0,\xi)-\int_{-\infty}^{t}\Psi(t,s)f(s,\phi(s,0,\xi))\,ds,
\end{equation}
or 
\begin{equation}
\label{hom-palm2}
H[t,\phi(t,0,\xi)]=\phi(t,0,\xi)-\int_{-\infty}^{t}\Psi(t,s)f(s,\phi(s,t,\phi(t,0,\xi)))\,ds.
\end{equation}

The proof that $\phi(t,0,\xi)\mapsto H[t,\phi(t,0,\xi)]$ is an homeomorphism for any fixed $t$ is given by Palmer in
\cite[pp.756--757]{Palmer}. In addition, it is straightforward to verify that 
(\ref{hom-palm1}) is a solution of (\ref{lin}) passing through $H[0,\xi]$ at $t=0$. 

Turning now to the proof that $\phi(\tau,\nu)\mapsto H(\tau,\nu)$ is a 
preserving orientation diffeomorphism for any fixed $\tau$, we will only consider the case when $(\tau,\nu)=(t,\phi(t,0,\xi))$
by using (\ref{hom-palm2}). The general case can be proved analogously and is left to the reader.

The proof is decomposed in several steps.

\noindent\textit{Step 1:} Differentiability of the map $\phi(t,0,\xi)\mapsto H[t,\phi(t,0,\xi)]$.

Let us denote $x(t)=\phi(t,0,\xi)$ for any fixed $t$. By using the fact that $f$ is $C^{1}$ 
combined with (\ref{MDE1}) and
\begin{equation}
\label{MDE2}
\frac{d}{dt}\Psi(t,s)=A(t)\Psi(t,s) \quad \textnormal{and} \quad \frac{d}{ds}\Psi(t,s)=-\Psi(t,s)A(s),
\end{equation}
we can deduce that: 
\begin{equation}
\label{delicat}
\begin{array}{rcl}
\displaystyle \frac{\partial H[t,x(t)]}{\partial x(t)}&=& \displaystyle I-\int_{-\infty}^{t}\Psi(t,s)Df(s,\phi(s,t,x(t)))\frac{\partial \phi(s,t,x(t))}{\partial x(t)}\,ds\\\\
\displaystyle &=&\displaystyle  I-\int_{-\infty}^{t}\frac{d}{ds}\Big\{\Psi(t,s)\frac{\partial \phi(s,t,x(t))}{\partial x(t)}\Big\}\,ds\\\\
\displaystyle &=&\displaystyle \lim\limits_{s\to -\infty}\Psi(t,s)\frac{\partial \phi(s,t,x(t))}{\partial x(t)}.
\end{array}
\end{equation}

In consequence, the differentiability of $x(t)\mapsto H[t,x(t)]$ follows if and only if the limit above exists.

\noindent\textit{Step 2:} (\ref{delicat}) is well defined.

By (\ref{MDE1}), we know that $\partial \phi(s,t,x(t))/\partial x(t)$ is solution of the equation:
\begin{equation}
\label{perturbacion1}
\left\{\begin{array}{rcl}
Y'(s)&=&\{A(s)+Df(s,\phi(s,t,x(t)))\}Y(s)\\
Y(t)&=&I.
\end{array}\right.
\end{equation}


By (\ref{MDE2}),(\ref{identity-IC}) and (\ref{perturbacion1}), the reader can 
verify that $Z(s,x(t))=\Psi(t,s)\frac{\displaystyle \partial \phi(s,t,x(t))}{\displaystyle \partial x(t)}$ 
is solution of the $x(t)$--parameter dependent matrix differential equation
\begin{equation}
\label{modif1}
\left\{\begin{array}{rcl}
\displaystyle \frac{dZ}{ds}&=&\big\{\Psi(t,s)Df(s,\phi(s,0,\xi))\Psi(s,t)\big\}Z(s) \\\\
Z(t)&=&I.
\end{array}\right. 
\end{equation}

A well known result of sucessive approximations (see \emph{e.g.}, \cite{Adrianova},\cite{Bellman}) states that
\begin{displaymath}
Z(s,x(t))=I-
\int_{s}^{t}F(r,\xi)\,dr+\sum\limits_{k=2}^{+\infty}(-1)^{k}\Bigg(\int_{s}^{t}F(r_{1},\xi)\,dr_{1}\cdots\int_{r_{k-1}}^{t}F(r_{k},\xi)\,dr_{k}\Bigg), 
\end{displaymath}
where $F(r,\xi)$ is defined by (\ref{L1}). Moreover, we also know that
$$
||Z(s,x(t))||\leq \exp\Big(\Big|\int_{s}^{t}||F(r,\xi)||\,dr\Big|\Big)
$$
and \textbf{(D1)} implies that (\ref{delicat}) is well defined. 

\noindent \emph{Step 3:} $x\mapsto H[t,x(t)]$ is a preserving orientation diffeomorphism.

Notice that the continuity of $\partial \phi(s,t,x(t))/\partial x(t)$ for any $s\leq t$ (ensured by Theorem 4.1 from \cite[Ch.V]{Hartman}) implies 
the continuity of $\partial H[t,x(t)]/\partial x(t)$ and we conclude that $H[t,x(t)]$ is a diffeomorphism.

The Liouville's formula (see \emph{e.g.}, Theorems 7.2 and 7.3 from \cite[Ch.1]{Coddington}) says that 
$$
\det\Psi(t,s)>0 \quad \textnormal{and} \quad \det\frac{\partial \phi(s,t,x(t))}{\partial x(t)}>0  \quad
\textnormal{for any} \quad s\leq t
$$
and \textbf{(D2)} implies that these inequalities are preserved at $s=-\infty$, and we conclude
that $x(t)\mapsto H[t,x(t)]$ is a preserving orientation diffeomorphism.
\end{proof}

\begin{remark}
As $t\mapsto H[t,x(t)]$ is solution of (\ref{lin}), the uniqueness of the solution implies that
\begin{equation}
\label{TH}
H[t,\phi(t,0,\xi)]=\Psi(t,0)H[0,\xi]. 
\end{equation}
 \end{remark}

\begin{remark}
\label{equiv-asympt}
The matrix differential equation (\ref{perturbacion1}) can be seen as a perturbation of the matrix equation
\begin{equation}
\label{Lin-one}
\left\{\begin{array}{rcl}
X'(s)&=&A(s)X(s)\\
X(t)&=&I
\end{array}\right.
\end{equation}
related to (\ref{lin}). In addition, (\ref{Lin-one}) has a solution $s\mapsto X(s)=\Psi(s,t)$. 

Notice that $\Psi(t,s)X(s)=I$ while Theorem \ref{anexo-palmer} says
that $s\mapsto \Psi(t,s)Y(s)$ exists at $s=-\infty$. This fact prompt us that the behavior of
(\ref{perturbacion1}) and (\ref{Lin-one}) at $s\to -\infty$ has some relation weaker than asymptotic equivalence. Indeed, 
in \cite{Akhmet},\cite{Rab} it is proved that \textbf{(D1)} is a necessary condition for asymptotic equivalence between a linear system
and a linear perturbation. 
\end{remark}

\begin{theorem}
\label{SegDer} 
If \textnormal{\textbf{(H1)--(H4)}} and \textnormal{\textbf{(D1)--(D3)}} are satisfied, then, for 
any fixed $t$, the function $x\mapsto H(t,x)$ is a $C^{2}$ preserving orientation diffeomorphism. In particular, if 
$t\mapsto x(t)$ is a solution of \textnormal{(\ref{no-lin})}, then, for any fixed $t$,
$x(t)\mapsto H[t,x(t)]$ is a $C^{2}$ preserving orientation diffeomorphism.
\end{theorem}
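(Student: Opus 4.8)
The plan is to follow the same three-step template used for Theorem~\ref{anexo-palmer}, working one order of differentiation higher. Observe first that the invertibility and the orientation-preserving character of $x\mapsto H(t,x)$ have already been obtained in Theorem~\ref{anexo-palmer} under \textbf{(H1)--(H4)} and \textbf{(D1)--(D2)}, and since upgrading the regularity of a diffeomorphism does not alter the sign of its Jacobian determinant, the only genuinely new task is to prove that the second derivatives $\partial^{2}H[t,x(t)]/\partial x_{j}(t)\partial x_{i}(t)$ exist and depend continuously on $x(t)$. As before I would restrict to the case $(\tau,\nu)=(t,\phi(t,0,\xi))$, writing $x(t)=\phi(t,0,\xi)$ and starting from the first-derivative identity established in (\ref{delicat}), namely
\begin{equation*}
\frac{\partial H[t,x(t)]}{\partial x(t)}=\lim_{s\to-\infty}Z(s,x(t)),
\end{equation*}
where $Z(s,x(t))$ solves the parameter-dependent equation (\ref{modif1}).

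Since $f$ is $C^{2}$, the coefficient $F(r,x(t))$ defined in (\ref{L1}) is $C^{1}$ in the parameter $x(t)$, so the standard theory of smooth dependence on parameters guarantees that $Z(s,x(t))$ is $C^{1}$ in $x(t)$. Setting $W_{j}(s,x(t))=\partial Z(s,x(t))/\partial x_{j}(t)$ and differentiating (\ref{modif1}) with respect to $x_{j}(t)$, I would obtain the inhomogeneous variational equation
\begin{equation*}
\frac{dW_{j}}{ds}=F(s,x(t))W_{j}(s,x(t))+\frac{\partial F(s,x(t))}{\partial x_{j}(t)}Z(s,x(t)),\qquad W_{j}(t,x(t))=0,
\end{equation*}
in which, differentiating (\ref{L1}), the inhomogeneity $\partial F/\partial x_{j}$ is precisely the term carrying the second derivative $D^{2}f(r,\phi(r,t,x(t)))$. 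Variation of parameters then yields the representation $W_{j}(s,x(t))=Z(s,x(t))\int_{t}^{s}Z(r,x(t))^{-1}\frac{\partial F(r,x(t))}{\partial x_{j}(t)}Z(r,x(t))\,dr$, which exhibits $W_{j}$ in terms of the data $\Psi(t,s)$, $Df$ and $D^{2}f$ announced in the introduction.

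The heart of the argument is then to differentiate the first-derivative identity under the limit sign, that is, to justify
\begin{equation*}
\frac{\partial^{2}H[t,x(t)]}{\partial x_{j}(t)\,\partial x(t)}e_{i}=\frac{\partial}{\partial x_{j}(t)}\lim_{s\to-\infty}Z(s,x(t))e_{i}=\lim_{s\to-\infty}\frac{\partial Z(s,x(t))}{\partial x_{j}(t)}e_{i},
\end{equation*}
the last limit being exactly the quantity (\ref{D2}) that \textbf{(D3)} declares to exist. This interchange of the parameter derivative with the limit $s\to-\infty$ is the main obstacle: \textbf{(D3)} only supplies pointwise existence, whereas the interchange requires control of the convergence as the parameter varies. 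I would secure it through the mean-value identity $Z(s,x+he_{j})-Z(s,x)=\int_{0}^{1}W_{j}(s,x+\theta he_{j})\,h\,d\theta$. Letting $s\to-\infty$, the left-hand side converges because each $Z(s,\cdot)$ converges by the argument of \emph{Step 2} of Theorem~\ref{anexo-palmer}, while the right-hand side converges once $W_{j}(s,\cdot)$ is shown to converge uniformly for the parameter ranging over a compact neighbourhood. That uniform convergence I would extract from the integral representation of $W_{j}$ above, dominating the integrand uniformly in $x(t)$ by means of the exponential bound (\ref{cota-exponencial}) together with the integrability furnished by \textbf{(D1)}; passing to the limit then identifies $\partial/\partial x_{j}(t)$ of $\lim_{s\to-\infty}Z(s,x(t))$ with the limit (\ref{D2}).

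Finally, continuity of $x(t)\mapsto\partial^{2}H[t,x(t)]/\partial x_{j}(t)\partial x_{i}(t)$ would follow from the continuity of $x(t)\mapsto W_{j}(s,x(t))$ (a consequence of the $C^{2}$ regularity of $f$ and of continuous dependence of solutions on parameters) combined with the uniformity of the convergence established above, which permits the limit to inherit continuity. Together with the diffeomorphism and orientation-preserving properties already furnished by Theorem~\ref{anexo-palmer}, this shows that $x\mapsto H(t,x)$ is a $C^{2}$ preserving orientation diffeomorphism, the general case $(\tau,\nu)$ being handled by the same reasoning.
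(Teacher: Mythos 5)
Your reduction is, at bottom, the paper's own: both arguments boil down to identifying the second derivative of $H$ with the limit, as $s\to-\infty$, of $\Psi(t,s)\,\partial^{2}\phi(s,t,x(t))/\partial x_{j}\partial x_{i}$, recognizing this matrix as $(\partial Z/\partial x_{j})(s,x(t))e_{i}$ (your $W_{j}(s,x(t))e_{i}$), and invoking \textbf{(D3)}. The paper reaches that object by differentiating the integral representation of $H$ twice and using the second variational equation (\ref{MDE15}) to write the integrand as a total derivative in $s$; you reach it by differentiating (\ref{modif1}) with respect to the parameter and applying variation of parameters. Up to bookkeeping these are the same computation, so the skeleton of your proof is sound and matches the published one.

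The genuine problem is your claimed justification of the derivative--limit interchange, which you correctly single out as the heart of the matter but then resolve with a domination argument the hypotheses cannot support. You assert that the integrand in the representation of $W_{j}$ can be dominated uniformly in $x(t)$ using (\ref{cota-exponencial}) together with \textbf{(D1)}. Neither helps: \textbf{(D1)} gives integrability only of $F$ itself, i.e.\ of the first-order data $Df$ along the solution, while the inhomogeneity $\partial F/\partial x_{j}$ obtained from (\ref{L1}) carries $D^{2}f(r,\phi(r,t,x(t)))$ contracted with $\partial\phi(r,t,x(t))/\partial x_{j}=\Psi(r,t)Z(r,x(t))e_{j}$, hence factors of the \emph{backward} transition matrix $\Psi(r,t)$. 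The bound (\ref{cota-exponencial}) controls $\|\Psi(t,r)\|$ only for $t\geq r$; it in fact forces $\|\Psi(r,t)\|\geq K^{-1}e^{\alpha(t-r)}\to\infty$ as $r\to-\infty$, and no hypothesis of the theorem bounds $D^{2}f$ at all (recall from Remark \ref{about-hyp} that $|\phi(r,0,\xi)|\to\infty$ backward in time, so even continuity of $D^{2}f$ gives nothing). The illustrative example of Section 4 shows this is not a removable technicality: there, verifying \textbf{(D3)} requires the extra integrability assumption (\ref{baby1}), introduced precisely to tame the $D^{2}f$ term, beyond everything needed for \textbf{(D1)}--\textbf{(D2)}. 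So the step "dominate via (\ref{cota-exponencial}) and \textbf{(D1)}" would fail in general. For what it is worth, the paper does not attempt any such justification: it performs the differentiation under the improper integral formally and treats \textbf{(D3)} as exactly the condition making the resulting limit exist, likewise asserting (rather than proving) continuity of the second derivatives in $x(t)$. If you strike the unsupported domination claim and, like the paper, let \textbf{(D3)} play that role, your argument coincides with the published proof; as written, it promises a locally uniform convergence that the stated hypotheses cannot deliver.
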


\begin{proof}
Let us denote $x(t)=\big(x_{1}(t),\ldots,x_{n}(t)\big)=\phi(t,0,\xi)$. As in the previous result, the proof will 
be decomposed in several steps:\\ 

\noindent \textit{Step 1:} About $\partial^{2}H[t,x(t)]/\partial x_{j}(t)\partial x_{i}(t)$.\\
\noindent For any $i,j\in \{1,\ldots,n\}$, we can verify that
\begin{displaymath}
\begin{array}{rcl}
\displaystyle \frac{\partial^{2}H}{\partial x_{j}\partial x_{i}}[t,x(t)]&=& \displaystyle -\int_{-\infty}^{t}\hspace{-0.3cm}\Psi(t,s)D^{2}f(s,\phi(s,t,x(t)))
\frac{\partial \phi(s,t,x(t))}{\partial x_{j}}\frac{\partial \phi(s,t,x(t))}{\partial x_{i}}\,ds\\\\
&&\displaystyle -\int_{-\infty}^{t}\Psi(t,s)Df(s,\phi(s,t,x(t)))\frac{\partial^{2}\phi(s,t,x(t))}{\partial x_{j}\partial x_{i}}\,ds,
\end{array}
\end{displaymath}
where $x_{i}=x_{i}(t)$ and $x_{j}=x_{j}(t)$.

Now, by using (\ref{MDE15}) and (\ref{MDE2}), the reader can verify that
\begin{displaymath}
\begin{array}{rcl}
 \displaystyle  \frac{\partial^{2}H}{\partial x_{j}\partial x_{i}}[t,x(t)] &=&\displaystyle -\int_{-\infty}^{t}\frac{d}{ds}\Big\{\Psi(t,s)\frac{\partial^{2}\phi(s,t,x(t))}{\partial x_{j}\partial x_{i}}\Big\}\,ds\\\\
                                &=&\displaystyle \lim\limits_{s\to -\infty}\Psi(t,s)\frac{\partial^{2}\phi(s,t,x(t))}{\partial x_{j}\partial x_{i}}
\end{array}
\end{displaymath}
and the existence of $\partial^{2}H[t,x(t)]/\partial x_{j}(t)\partial x_{i}(t)$ follows if and only if the limit above exists.

\noindent\textit{Step 2:}  $\partial^{2}H[t,x(t)]/\partial x_{j}(t)\partial x_{i}(t)$ is well defined.\\

By using (\ref{MDE1}) and (\ref{MDE2}), we can see that $s\mapsto \Psi(t,s)\partial\phi(s,t,x(t))/\partial x_{i}$ is a
solution of (\ref{L2}) passing through $e_{i}$ at $s=t$. In consequence, we can deduce that
$$
\Psi(t,s)\frac{\partial \phi(s,t,x(t))}{\partial x_{i}}=Z(s,x(t))e_{i}
$$
and
$$
\Psi(t,s)\frac{\partial^{2} \phi(s,t,x(t))}{\partial x_{j}\partial x_{i}}=\frac{\partial Z}{\partial x_{j}}(s,x(t))e_{i}.
$$

By \textbf{(D3)}, the last identity has limit when $s\to -\infty$ and 
$\partial^{2}H[t,x(t)]/\partial x_{j}\partial x_{i}$ is well defined and continuous with respect to
$x(t)$.
\end{proof}

\begin{remark}
A careful reading of the results above, shows that our methods can be generalized in order to prove that
$H$ is a $C^{r}$ diffeomorphism with $r\geq 2$.
\end{remark}



\section{Density function}
As we pointed out in subsection 1.2, \textbf{(H1)} implies that (\ref{lin})
is uniformly asymptotically stable, which is a particular case of global asymptotical stability.
Now, by Proposition \ref{dens-lin}, there exists a density function 
$\rho\in C^{1}(\mathbb{R}\times \mathbb{R}^{n}\setminus\{0\},[0,+\infty))$ associated
to (\ref{lin}). By following the ideas for the autonomous case studied by Monz\'on \cite[Prop. III.1]{Monzon-0}
combined with the function $\rho$, we state the following result:
\begin{theorem}
\label{ex-dens0}
If \textnormal{\textbf{(H1)--(H5)}} and \textnormal{\textbf{(D1)--(D3)}} are satisfied,
then there exists a density function $\bar{\rho}\in 
C(\mathbb{R}\times \mathbb{R}^{n}\setminus\{0\},[0,+\infty))$ 
associated to \textnormal{(\ref{no-lin})}, defined by
\begin{equation}
\label{dens-nl}
\bar{\rho}(t,x)=\rho(t,H(t,x))\Big|\frac{\partial H(t,x)}{\partial x}\Big|,
\end{equation}
where $H(\cdot,\cdot)$ is the $C^{2}$ preserving orientation diffeomorphism defined 
before, $x$ is any initial condition of \textnormal{(\ref{no-lin})} and
$|\cdot|$ denotes a determinant. 
\end{theorem}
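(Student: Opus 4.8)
The plan is to check directly that $\bar\rho$ from (\ref{dens-nl}) fulfils every requirement of Definition \ref{density} for the vector field $g(t,x)=A(t)x+f(t,x)$ of (\ref{no-lin}). Several of these are immediate. By Theorem \ref{SegDer} the map $H(t,\cdot)$ is an orientation-preserving $C^{2}$ diffeomorphism, so $J(t,x):=\det\partial H(t,x)/\partial x>0$; hence $\bar\rho\ge 0$, the absolute value in (\ref{dens-nl}) is superfluous, and $\bar\rho$ is continuous because $\rho\in C^{1}$ and $x\mapsto J(t,x)$ is continuous. Hypothesis \textbf{(H5)} gives $g(t,0)=0$ and $\phi(s,\tau,0)\equiv 0$, so the integral defining $H$ vanishes at the origin and $H(t,0)=0$; thus $\bar\rho$ is well defined on $\mathbb{R}\times(\mathbb{R}^{n}\setminus\{0\})$. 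For integrability outside a ball I would use $|H(t,x)-x|\le 4K\mu\alpha^{-1}=:c$ from Proposition \ref{difeo}: the substitution $y=H(t,x)$ gives, for $R>c$,
\begin{equation}
\int_{|x|>R}\bar\rho(t,x)\,dx=\int_{H(t,\{|x|>R\})}\rho(t,y)\,dy\le\int_{|y|>R-c}\rho(t,y)\,dy<\infty,
\end{equation}
the last integral being finite since $\rho$, furnished by Proposition \ref{dens-lin}, is itself integrable outside a ball.

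The substantive requirement is the differential inequality, whose engine is the infinitesimal form of the conjugacy. Since for each solution $x(t)=\phi(t,0,\xi)$ the curve $t\mapsto H[t,x(t)]$ solves (\ref{lin}) (equivalently, by differentiating (\ref{TH})), the chain rule gives
\begin{equation}
\label{inf-conj}
\frac{\partial H}{\partial t}(t,x)+\frac{\partial H}{\partial x}(t,x)\,g(t,x)=A(t)H(t,x),
\end{equation}
which holds at every $(t,x)$ because a solution passes through each point. I would then adjoin the trivial equation $\tau'=1$ and work on $\mathbb{R}^{n+1}$: put $\tilde g(\tau,x)=(1,g(\tau,x))$, $\tilde\ell(\tau,y)=(1,A(\tau)y)$ and $\tilde H(\tau,x)=(\tau,H(\tau,x))$. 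Then $\det D\tilde H=J$, the operator $\partial_{t}\sigma+\triangledown\cdot(\sigma v)$ of Definition \ref{density} is exactly the divergence on $\mathbb{R}^{n+1}$ of $\sigma\tilde v$, and (\ref{inf-conj}) is precisely the conjugacy relation $D\tilde H\,\tilde g=\tilde\ell\circ\tilde H$.

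Now the claim reduces to the autonomous transformation rule underlying Monz\'on's construction \cite{Monzon-0}: for an orientation-preserving $C^{2}$ diffeomorphism $\Phi$ with $D\Phi\,v=w\circ\Phi$, the pullback density $(\sigma\circ\Phi)\det D\Phi$ satisfies $\triangledown\cdot[(\sigma\circ\Phi)(\det D\Phi)\,v]=\det D\Phi\cdot[(\triangledown\cdot(\sigma w))\circ\Phi]$ (a naturality statement for the Lie derivative of the volume form). Applying this with $\Phi=\tilde H$, $v=\tilde g$, $w=\tilde\ell$, $\sigma=\rho$ and returning to $(t,x)$ yields
\begin{equation}
\label{covariance}
\frac{\partial\bar\rho}{\partial t}(t,x)+\triangledown\cdot[\bar\rho(t,x)g(t,x)]
=J(t,x)\Big[\frac{\partial\rho}{\partial t}(t,y)+\triangledown_{y}\cdot\big(\rho(t,y)A(t)y\big)\Big]_{y=H(t,x)}.
\end{equation}
The bracket on the right is strictly positive because $\rho$ is a density function for (\ref{lin}), and $J>0$; hence the left-hand side is strictly positive, which is the condition of Definition \ref{density} for (\ref{no-lin}).

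The main obstacle I anticipate is technical rather than conceptual: establishing (\ref{covariance}) rigorously. Splitting $\partial_{t}\bar\rho+\triangledown\cdot(\bar\rho g)$ as $J\big[\partial_{t}(\rho\circ H)+\triangledown_{x}(\rho\circ H)\cdot g\big]+(\rho\circ H)\big[\partial_{t}J+\triangledown_{x}\cdot(Jg)\big]$, the first bracket collapses by (\ref{inf-conj}) to $\partial_{t}\rho(t,H)+\triangledown_{y}\rho(t,H)\cdot A(t)H$, while the second needs the Liouville/Jacobi identity $\partial_{t}J+\triangledown_{x}\cdot(Jg)=J\,\tr A(t)$, expressing that $H$ transports the nonlinear field onto the linear one whose divergence is $\tr A(t)$. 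Both identities ultimately rest on differentiating the representations (\ref{hom-palm1})--(\ref{hom-palm2}) of $H$ under the integral sign and controlling the limits as $s\to-\infty$, and it is here that \textbf{(D1)}--\textbf{(D3)} together with \textbf{(H1)}--\textbf{(H4)} are consumed. A secondary point to address is regularity in time: $H$ is known to be $C^{2}$ only in $x$, so $\bar\rho$ is asserted merely continuous; I would therefore confirm that the derivatives appearing in (\ref{covariance}) exist and that the strict inequality holds in the almost-everywhere sense required by Definition \ref{density}.
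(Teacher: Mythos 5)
Your proposal is correct in substance and arrives at exactly the same key identity as the paper --- namely that the density operator applied to $\bar\rho$ equals $\det\big(\partial H/\partial x\big)$ times the density operator of the linear system (\ref{lin}) evaluated at $y=H(t,x)$, whence positivity follows from Proposition \ref{dens-lin} and Theorem \ref{anexo-palmer} --- but by a genuinely different route. You first differentiate the conjugacy to get the pointwise PDE $\partial_t H+(\partial H/\partial x)\,g=A(t)H$, lift everything to autonomous space--time by adjoining $\tau'=1$, and then invoke naturality of the divergence (Lie derivative of the volume form) under pullback, i.e.\ Monz\'on's autonomous transformation rule from \cite{Monzon-0}. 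The paper never forms $\partial_t H$ or $\partial_t J$ as standalone objects: it writes the density operator as a derivative along the flow of (\ref{no-lin}), $\frac{\partial}{\partial \eta}\big\{\bar{\rho}(\tau+t,\phi(\tau+t,t,x))\,\big|\partial \phi(\tau+t,t,x)/\partial x\big|\big\}\big|_{\tau=0}$, using Liouville's formula, and then substitutes the integrated conjugacy (\ref{TH}), $H[\tau+t,\phi(\tau+t,t,x)]=\Psi(\tau+t,t)H(t,x)$, so that the $\tau$-dependence falls only on the manifestly smooth objects $\rho$ and $\Psi(\tau+t,t)$; the $\tau$-derivative at $0$ is then computed directly and split into two terms ($A_1+A_2$ in the paper), yielding your covariance formula. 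What the paper's route buys is precisely circumvention of the regularity issue you flag at the end: time-differentiability of $H$, of $J$, and of $\bar\rho$ is never needed separately, because (\ref{TH}) trades $\partial_t H$ for the transition matrix. Your route is more conceptual and generalizes readily, but to run it rigorously you must first prove $H$ is jointly $C^1$ in $(t,x)$ (e.g.\ by differentiating the representation (\ref{hom-palm2}) in time under the integral sign, with convergence controlled by \textbf{(D1)}), which is extra work that Theorems \ref{anexo-palmer} and \ref{SegDer} do not supply, since they give smoothness in $x$ for fixed $t$ only. Your integrability step is the same change-of-variables argument as the paper's, just with a cleaner quantitative bound via $|H(t,x)-x|\le 4K\mu\alpha^{-1}$. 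Finally, note that the mismatch you point out --- Definition \ref{density} demands a $C^1$ density while $\bar\rho$ is only asserted (and only proved) continuous in $(t,x)$, being $C^1$ in $x$ alone --- is present in the paper as well and is not resolved there either, so flagging it is fair and not a defect specific to your argument.
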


\begin{proof}
In spite that in the previous sections, the initial condition and the determinant were respectively denoted
by $\xi$ and $\det(\cdot)$, the notation of (\ref{dens-nl}) is classical in the density function literature. The reader will not be
disturbed by this fact.

We shall prove that (\ref{dens-nl}) satisfies the properties of Definition \ref{density}
with $g(t,x)=A(t)x+f(t,x)$. Indeed, $\bar{\rho}$ is non--negative since $\rho$ is non--negative
and $H$ is preserving orientation. In addition, $\bar{\rho}$ is $C^{1}$ since $H$ is $C^{2}$.

The rest of the proof will be decomposed in several steps:

\noindent\emph{Step 1:} $\bar{\rho}(t,x)$ is integrable outside any ball centered in the origin.

Let $B$ be an open ball centered at the origin. By using $H(t,0)=0$ and statement (i) from Proposition \ref{difeo}, we can 
conclude that $H(t,B)$ is an open and bounded set containing the origin. In consequence, for any fixed $t$, the outside of $B$ is mapped in the 
outside of another ball centered at the origin and contained in $H(t,B)$.

Let $\mathcal{Z}$ be a measurable set whose closure does not contain the origin. The property stated above implies that
$H(t,\mathcal{Z})$ is outside of some ball centered at the origin. Now, by the change of variables
theorem, we can see that
$$
\displaystyle \int_{\mathcal{Z}}\bar{\rho}(t,x)\,dx
=
\int_{\mathcal{Z}}\rho(t,H(t,x))\Big|\frac{\partial H(t,x)}{\partial x}\Big|\,dx
=
\int_{H(t,\mathcal{Z})}\rho(t,y)\,dy.
$$

Finally, as $\rho(t,\cdot)$ is integrable outside any open ball centered at the origin, the same follows for $\bar{\rho}(t,\cdot)$.

\noindent\emph{Step 2:} $\bar{\rho}(t,x)$ verifies
\begin{equation}
\label{postividad}
\frac{\partial \bar{\rho}}{\partial t}(t,x)+\triangledown\cdot (\bar{\rho}g)(t,x)>0 \quad 
\textnormal{a.e. in $\mathbb{R}^{n}$}. 
\end{equation}

Firstly, by using the Liouville's formula (see \emph{e.g.}, \cite[Corollary 3.1]{Hartman}), we know that
\begin{displaymath}
\frac{\partial}{\partial \eta}\Big|\frac{\partial \phi(\tau+t,t,x)}{\partial x}\Big|\Bigg|_{\tau=0}=\triangledown \cdot g(t,x),
\end{displaymath}
where $\eta=\tau+t$. Now, it is easy to verify that:
\begin{displaymath}
\begin{array}{rcl}
\displaystyle \frac{\partial \bar{\rho}}{\partial t}(t,x)+\triangledown \cdot (\bar{\rho}g)(t,x)&=&
\displaystyle \frac{\partial}{\partial \eta}\Big\{\bar{\rho}(\tau+t,\phi(\tau+t,t,x))\displaystyle \Big|\frac{\partial \phi(\tau+t,t,x)}{\partial x}\Big|\Big\}\Big|_{\tau=0}\\\\
&=&\displaystyle \frac{\partial}{\partial \eta}\Big\{\rho(\tau+t,H[\tau+t,\phi(\tau+t,t,x)])\\\\\
& & \displaystyle \Big|\frac{\partial H[\tau+t,\phi(\tau+t,t,x)]}{\partial \phi(\tau+t,t,x)}\Big|\Big|\frac{\partial \phi(\tau+t,t,x)}{\partial x}\Big|\Big\}\Big|_{\tau=0}\\\\
&=& \displaystyle \frac{\partial}{\partial \eta}\Big\{\rho(\tau+t,H[\tau+t,\phi(\tau+t,t,x)])\\\\
& & \displaystyle \Big|\frac{\partial H[\tau+t,\phi(\tau+t,t,x)]}{\partial x}\Big|\Big\}\Big|_{\tau=0}.\\\\
\end{array}
\end{displaymath}

Secondly, a consequence of (\ref{TH}) is
\begin{displaymath}
H[\tau+t,\phi(\tau+t,t,x)]=\Psi(\tau+t,t)H(t,x),
\end{displaymath}
which implies:
\begin{displaymath}
\begin{array}{rcl}
\displaystyle \frac{\partial \bar{\rho}}{\partial t}(t,x)+\triangledown \cdot (\bar{\rho}g)(t,x) &=& \displaystyle  \frac{\partial}{\partial \eta}\Big\{\rho(\tau+t,\Psi(\tau+t,t)H(t,x))
\displaystyle \Big|\frac{\partial \Psi(\tau+t,t)H(t,x)}{\partial x}\Big|\Big\}\Big|_{\tau=0}\\\\
&=&A_{1}(\tau+t,x)+A_{2}(\tau+t,x)\Big|_{\tau=0},
\end{array}
\end{displaymath}
where $A_{1}(\cdot,\cdot)$ and $A_{2}(\cdot,\cdot)$ are respectively defined by
\begin{displaymath}
\begin{array}{rcl}
A_{1}(\tau+t,x)&=&\displaystyle \frac{\partial }{\partial \eta}\Big\{\rho(\tau+t,\Psi(\tau+t,t)H(t,x))\Big\}\Big|\frac{\partial \Psi(\tau+t,t)H(t,x)}{\partial x}\Big|\\\\
&=&\displaystyle \Big\{\frac{\partial \rho}{\partial \eta}(\tau+t,\Psi(\tau+t,t)H(t,x))+\\\\
& &\triangledown\rho\Big(\tau+t,\Psi(\tau+t,t)H(t,x)\Big)A(\tau+t)\Psi(\tau+t,t)H(t,x)\Big\}\\\\
& &\displaystyle \Big|\frac{\partial \Psi(\tau+t,t)H(t,x)}{\partial x}\Big|
\end{array}
\end{displaymath}
and
\begin{displaymath}
\begin{array}{rcl}
A_{2}(\tau+t,x)&=&\rho(\tau+t,\Psi(\tau+t,t)H(t,x))\displaystyle \frac{\partial}{\partial \eta}
\Big\{\Big|\frac{\partial \Psi(\tau+t,t)H(t,x)}{\partial x}\Big|\Big\}\\\\
&=&\rho(\tau+t,\Psi(\tau+t,t)H(t,x))\\\\
& &\displaystyle \frac{\partial}{\partial \eta}\Big\{\Big|\frac{\partial \Psi(\tau+t,t)H(t,x)}{\partial H(t,x)}\Big|\Big|\frac{\partial H(t,x)}{\partial x}\Big|\Big\}
\end{array}
\end{displaymath}

As
\begin{displaymath}
\begin{array}{rcl}
\displaystyle A_{1}(t,x)&=&\displaystyle \Big\{\frac{\partial \rho}{\partial \eta}(t,H(t,x))+\triangledown\rho(t,H(t,x))A(t)H(t,x)\Big\}\Big|\frac{\partial H(t,x)}{\partial x}\Big|
\end{array}
\end{displaymath}
and
\begin{displaymath}
\begin{array}{rcl}
A_{2}(t,x)&=&\displaystyle \rho(t,H(t,x))\tr A(t)H(t,x)\Big|\frac{\partial H(t,x)}{\partial x}\Big|,
\end{array}
\end{displaymath}
we can conclude that

\begin{displaymath}
\begin{array}{rcl}
\displaystyle \frac{\partial \bar{\rho}}{\partial t}(t,x)+\triangledown \cdot (\bar{\rho}g)(t,x)&=&A_{1}(t,x)+A_{2}(t,x)\\\\
&=&\displaystyle \Big\{\frac{\partial \rho}{\partial \eta}(t,H(t,x))+\triangledown \cdot \rho(t,H(t,x)) A(t)H(t,x)\Big\}\Big|\frac{\partial H(t,x)}{\partial x}\Big|,
\end{array}
\end{displaymath}
which is positive since is the product of two positive terms. The positiveness of the first one is ensured by Proposition \ref{dens-lin}, 
while the second follows by Theorem \ref{anexo-palmer}.

\noindent\emph{Step 3:} End of proof.

As we commented before, the existence of density function associated to (\ref{no-lin}) is based on the homeomorphism $H$ constructed by 
Palmer (Proposition \ref{difeo}) and the existence of the density function $\rho(t,x)$ associated to (\ref{lin}) constructed by 
Monz\'on (Proposition \ref{dens-lin}). Proposition \ref{anexo-palmer} and Theorem \ref{SegDer} ensure that $H$ is a $C^{2}$ preserving orientation 
diffeomorphism while the previous steps state that (\ref{dens-nl}) is indeed a density function associated to (\ref{no-lin}) and the result follows.
\end{proof}

\subsection{An application to nonlinear systems}
Let us consider the nonlinear system
\begin{equation}
\label{dugma}
x'=g(t,x) 
\end{equation}
where $g$ is a $C^{2}$ function satisfying
\begin{itemize}
\item[\textbf{(H1')}] $g(t,0)=0$ and $|g(t,x)|\leq \tilde{\mu}$ for any $t\in \mathbb{R}$ and $x\in \mathbb{R}^{n}$.
\item[\textbf{(H2')}] $|g(t,x_{1})-g(t,x_{2})|\leq L |x_{1}-x_{2}|$ for any $t\in \mathbb{R}$.
\end{itemize}

\begin{corollary}
\label{application}
If:
\begin{itemize}
\item[\textbf{(G1)}] The linear system $y'=Dg(t,0)y$ is exponentially stable and its transition matrix satisfy 
$$
||\Phi(t,s)||\leq Ke^{-\alpha(t-s)} \quad \textnormal{for some} \quad K\geq 1 \quad \textnormal{and} \quad \alpha>0.
$$
\item[\textbf{(G2)}] The Lipschitz constant $L$ satisfies
$$
L+||Dg(t,0)|| \leq \alpha/4K \quad \textnormal{for any} \quad t\in \mathbb{R},
$$
\item[\textbf{(G3)}] The first derivative of $g$ is such that
\begin{displaymath}
\int_{-\infty}^{t}||\widetilde{F}(r,\xi)||_{\infty}\,dr< 1 
\end{displaymath}
for any fixed $t$, with
\begin{displaymath}
\widetilde{F}(r,\xi)=\Phi(t,r)\{Dg(r,\varphi(r,0,\xi))-Dg(r,0)\}\Phi(r,t),
\end{displaymath}
where $\varphi(r,0,\xi)$ is the solution of \textnormal{(\ref{dugma})} passing through $\xi$ at $r=0$.
\item[\textbf{(G4)}] For any fixed $t$, $Dg(t,0)$ and $Dg(t,\varphi(t,0,\xi))$ are such that
\begin{displaymath}
\liminf\limits_{s\to-\infty}-\int_{s}^{t}\tr Dg(r,0)\,dr>-\infty \hspace{0.2cm} \textnormal{and} \hspace{0.2cm}
\liminf\limits_{s\to-\infty}-\int_{s}^{t}\tr Dg(r,\varphi(r,0,\xi))\,dr>-\infty
\end{displaymath}
for any initial condition $\xi$.
\item[\textbf{(G5)}] For any fixed $t$ and $i,j=1,\ldots,n$, the following limit exists
\begin{displaymath}
\lim\limits_{s\to -\infty}\frac{\widetilde{Z}(s,x(t))}{\partial x_{j}(t)}e_{i},
\end{displaymath}
where $x(t)=\varphi(t,0,\xi)$ and $\widetilde{Z}(s,x(t))$ is a fundamental matrix of 
\begin{displaymath}
\widetilde{Z}'=\widetilde{F}(s,x(t))\widetilde{Z}.
\end{displaymath}
\end{itemize}
then there exists a density function $\bar{\rho}\in 
C(\mathbb{R}\times \mathbb{R}^{n}\setminus\{0\},[0,+\infty))$ associated to \textnormal{(\ref{dugma})}.
\end{corollary}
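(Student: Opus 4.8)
The plan is to recast the nonlinear equation \textnormal{(\ref{dugma})} as a quasilinear system of the form \textnormal{(\ref{no-lin})} by linearizing at the origin and then to invoke Theorem \ref{ex-dens0}. Concretely, I would set
\begin{displaymath}
A(t):=Dg(t,0) \qquad \textnormal{and} \qquad f(t,x):=g(t,x)-Dg(t,0)x,
\end{displaymath}
so that $g(t,x)=A(t)x+f(t,x)$, the matrix $A(t)$ is continuous and bounded (by \textbf{(G2)}, $||Dg(t,0)||\le\alpha/4K$), and the transition matrix of $y'=A(t)y$ is exactly the matrix $\Phi(t,s)$ of \textbf{(G1)}. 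The bulk of the work is then to check that \textbf{(G1)--(G5)} together with \textbf{(H1')--(H2')} force the hypotheses \textbf{(H1)--(H5)} and \textbf{(D1)--(D3)} required by Theorem \ref{ex-dens0}, after which the density function is produced as $\bar{\rho}(t,x)=\rho(t,H(t,x))|\partial H(t,x)/\partial x|$ exactly as there.

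Most of the correspondences are immediate once one records the elementary identity $Df(r,x)=Dg(r,x)-Dg(r,0)$. Indeed, $f$ is $C^{2}$ because $g$ is; \textbf{(H5)} holds since $f(t,0)=g(t,0)=0$; the triangle inequality gives $|f(t,x_{1})-f(t,x_{2})|\le(L+||Dg(t,0)||)|x_{1}-x_{2}|$, so \textbf{(H2)} holds with $\gamma=L+\sup_{t}||Dg(t,0)||$ and then \textbf{(G2)} is precisely \textbf{(H4)}, while \textbf{(G1)} is \textbf{(H3)} with $\Psi=\Phi$. For the differentiability hypotheses the same identity yields
\begin{displaymath}
\Phi(t,r)Df(r,\varphi(r,0,\xi))\Phi(r,t)=\Phi(t,r)\{Dg(r,\varphi(r,0,\xi))-Dg(r,0)\}\Phi(r,t)=\widetilde{F}(r,\xi),
\end{displaymath}
so that \textbf{(G3)} is \textbf{(D1)}, and since the corresponding fundamental matrix $\widetilde{Z}$ then coincides with the matrix $Z$ of \textbf{(D3)}, also \textbf{(G5)} is \textbf{(D3)}. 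Likewise $\tr A(r)=\tr Dg(r,0)$ and $\tr\{A(r)+Df(r,\varphi(r,0,\xi))\}=\tr Dg(r,\varphi(r,0,\xi))$, so \textbf{(G4)} is \textbf{(D2)}. Granting these, Theorems \ref{anexo-palmer} and \ref{SegDer} make $x\mapsto H(t,x)$ a $C^{2}$ orientation preserving diffeomorphism and Theorem \ref{ex-dens0} concludes.

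The main obstacle is hypothesis \textbf{(H1)}: the uniform bound $|f(t,x)|\le\mu$ is \emph{not} inherited, since $f(t,x)=g(t,x)-Dg(t,0)x$ carries the linear term $-Dg(t,0)x$ and hence grows linearly in $|x|$ even though $g$ itself is bounded by \textbf{(H1')}. I would circumvent this by observing that in Palmer's construction \textbf{(H1)} serves only to guarantee convergence of the defining integral $\int_{-\infty}^{\tau}\Phi(\tau,s)f(s,\varphi(s,\tau,\nu))\,ds$ and the global bound on $H-\mathrm{id}$, and that both persist under \textbf{(H5)} combined with the smallness already built into \textbf{(G2)}. Indeed \textbf{(H5)} gives $|f(s,y)|\le\gamma|y|$, while \textbf{(H2')} and $g(t,0)=0$ give $|g(t,x)|\le L|x|$, whence a backward Gronwall estimate yields $|\varphi(s,\tau,\nu)|\le e^{L(\tau-s)}|\nu|$ for $s\le\tau$, so that
\begin{displaymath}
\int_{-\infty}^{\tau}|\Phi(\tau,s)f(s,\varphi(s,\tau,\nu))|\,ds\le K\gamma|\nu|\int_{0}^{\infty}e^{-(\alpha-L)u}\,du<\infty,
\end{displaymath}
the exponent being negative because $L\le\alpha/4K<\alpha$. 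Thus $H$ is still well defined, $H(t,0)=0$ still holds by \textbf{(H5)}, and the global bound on $H-\mathrm{id}$ merely weakens to a locally uniform one, which is all that Step 1 of Theorem \ref{ex-dens0} actually uses. Verifying in detail that every estimate in the proofs of Proposition \ref{difeo} and Theorems \ref{anexo-palmer}--\ref{SegDer} indeed tolerates this linearly growing $f$ is the delicate part; the remaining bookkeeping is routine.
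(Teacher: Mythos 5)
Your reduction---setting $A(t)=Dg(t,0)$, $f(t,x)=g(t,x)-Dg(t,0)x$ and invoking Theorem \ref{ex-dens0}---is exactly the intended route: the paper states this corollary without any proof, treating it as an immediate consequence of Theorem \ref{ex-dens0}, and your dictionary \textbf{(G1)}$\leftrightarrow$\textbf{(H3)}, \textbf{(G2)}$\leftrightarrow$\textbf{(H2)}/\textbf{(H4)}, \textbf{(G3)}$\leftrightarrow$\textbf{(D1)}, \textbf{(G4)}$\leftrightarrow$\textbf{(D2)}, \textbf{(G5)}$\leftrightarrow$\textbf{(D3)}, together with \textbf{(H5)}, is correct. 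You are also right---and more careful than the paper itself---that \textbf{(H1)} genuinely fails for this $f$: it carries the term $-Dg(t,0)x$ and grows linearly, so Proposition \ref{difeo} cannot be cited verbatim. This is a real defect in the corollary as stated, which the paper silently passes over.

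However, your repair of the \textbf{(H1)} issue is not a proof, and the part you defer is precisely where the difficulty lies. The claim that \textbf{(H1)} ``serves only to guarantee convergence of the defining integral and the global bound on $H-\mathrm{id}$'' understates its role: Palmer's proof that $H(t,\cdot)$ is a bijection constructs the inverse $L$ as a fixed point in the Banach space of \emph{bounded} continuous functions and proves $H\circ L=L\circ H=\mathrm{id}$ via uniqueness of solutions that stay a bounded distance apart. Once $f$ grows linearly, the function $t\mapsto\chi(t,(\tau,\nu))$ is no longer bounded on $\mathbb{R}$ (your own Gronwall estimate shows it can grow like $e^{L(\tau-t)}|\nu|$ as $t\to-\infty$), so the characterization ``the unique bounded solution''---and with it the entire bijectivity argument---collapses. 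Injectivity is not cosmetic here: Step 1 of the proof of Theorem \ref{ex-dens0} applies the change-of-variables formula, which requires $H(t,\cdot)$ to be injective. A complete proof must therefore re-establish the homeomorphism property, for instance by redoing Palmer's fixed point in a weighted space of functions growing at most like $e^{c(\tau-t)}$ with $c<\alpha$ (this is viable precisely because \textbf{(G2)} also forces $\|Dg(t,0)\|\le\alpha/4K$, so both the linear flow and the nonlinear solutions grow backward strictly slower than $e^{\alpha(\tau-t)}$, while nonzero homogeneous solutions must grow at least like $K^{-1}e^{\alpha(\tau-t)}$, which gives uniqueness in the weighted class); alternatively, note that your displayed estimate gives $|H(t,x)-x|\le K\gamma|x|/(\alpha-L)\le|x|/3$, hence $|H(t,x)|\ge 2|x|/3$, so $H(t,\cdot)$ is proper, and properness combined with the everywhere-positive Jacobian furnished by Theorem \ref{anexo-palmer} yields a global diffeomorphism by Hadamard's global inverse function theorem. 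Either route must actually be carried out; as written, your proposal identifies the obstacle and then sets it aside, so the argument is incomplete---though, in fairness, the gap it leaves open is one the paper never addresses at all.
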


\section{Illustrative Example}
Let us consider the scalar equation
\begin{equation}
\label{baby}
x'=-ax+h(t)\arctan(x), 
\end{equation}
where $a>0$ and $h\colon \mathbb{R}\to \mathbb{R}_{+}$ is bounded and continuous. In addition, we will suppose that 
\begin{equation}
\label{baby1}
r\mapsto h(r)e^{-ar} \quad \textnormal{is integrable on} \quad (-\infty,\infty). 
\end{equation}

It is easy to see that \textbf{(H1)--(H2)} are satisfied with $\mu=||h||_{\infty}\pi/2$
and $\gamma=||h||_{\infty}$.

Notice that that \textbf{(H3)} is satisfied since $\Psi(t,s)=e^{-a(t-s)}$ and \textbf{(H4)}
is satisfied if and only if $4||h||_{\infty}\leq a$.

Moreover, \textbf{(D1)} is satisfied if for any solution $r\mapsto \phi(r,0,\xi)$ of (\ref{baby}) 
\begin{equation}
\label{int-conver}
\int_{-\infty}^{\infty}\frac{h(r)}{1+\phi^{2}(r,0,\xi)}\,dr<1.
\end{equation}

It is interesting to point out $\phi(t,0,\xi)$ is unbounded and have exponential growth at $t=-\infty$. Now, it is easy to note that
$$
\lim\limits_{s\to -\infty} -as=+\infty,
$$
which implies that
$$
\liminf\limits_{s\to -\infty}-\int_{s}^{t}\Big\{-a+\frac{h(r)}{1+\phi^{2}(r,0,\xi)}\Big\}\,dr>-\infty
$$
for any fixed $t$, and \textbf{(D2)} is satisfied.

Letting $f(t,x)=h(t)\arctan(x)$ 
and noticing that
$$
Z(s,x(t))=\exp\Big\{-\int_{s}^{t}Df(u,\phi(u,t,x(t)))\,du\Big\},
$$
and
$$
\frac{\partial \phi(s,t,x(t))}{\partial x(t)}=\exp\Big\{a(t-s)-\int_{s}^{t}Df(u,\phi(u,t,x(t)))\,du\Big\}
$$
with $x(t)=\phi(t,0,\xi)$. Consequently, a straigthforward computation shows that \textbf{(D3)} is satisfied if and only if
$$
Z(s,x(t))\Big[\int_{s}^{t}\exp\Big(a\{t-u\}-\int_{u}^{t}Df(r,\phi(r,t,x(t)))\,dr\Big)D^{2}f(u,\phi(u,t,x(t)))\,du\Big],
$$
has limit when $s\to -\infty$.

Finally, (\ref{baby1}) and (\ref{int-conver}) imply that \textbf{(D3)} is satisfied since the function
$$
u\mapsto \frac{h(u)e^{-a(u-t)}\phi(u,t,x(t))}{(1+\phi^{2}(u,t,x(t)))^{2}}=\frac{h(u)e^{-a(u-t)}\phi(u,0,\xi)}{(1+\phi^{2}(u,0,\xi))^{2}}
$$
is integrable on $(-\infty,t]$ for any $t\in \mathbb{R}$.

\end{document}